\documentclass[a4paper,11pt]{article}

\usepackage{amsmath,amsthm,amsfonts,amssymb,graphics,epsfig,color}

\usepackage[left=2cm,top=3cm,right=2cm,bottom=3cm,bindingoffset=0.5cm]{geometry}
\usepackage[latin1]{inputenc}
\usepackage[english]{babel}
\usepackage{verbatim}
\usepackage{amscd,enumerate}
\usepackage{epstopdf}
\usepackage{graphicx}
\usepackage{multicol}

\newtheorem{thm}{Theorem}[section]

\newtheorem{lem}[thm]{Lemma}
\newtheorem{rem}[thm]{Remark}

\usepackage{caption}
\usepackage{subcaption}

\usepackage{multirow} 
\usepackage{float}
\usepackage{graphicx}
\usepackage{longtable}  

\newcommand\pder[2][]{\ensuremath{\frac{\partial#1}{\partial#2}}}

\newcommand{\Ste}{\text{Ste}}
\newcommand{\Bi}{\text{Bi}}
\DeclareMathOperator\erf{erf}

\begin{document}



\title{Existence and uniqueness of solution for two one-phase Stefan problems with variable thermal coefficients}

\author{
Julieta Bollati$^{1,2}$, Mar\'ia F. Natale$^{1}$, Jos\'e A. Semitiel$^{1}$,  Domingo A. Tarzia $^{1,2}$\\ \\
\small {{$^1$} Depto. Matem\'atica, FCE, Univ. Austral, Paraguay 1950} \\  
\small {S2000FZF Rosario, Argentina.} \\
\small{{$^2$} CONICET}
}
\date{}

\maketitle

\begin{abstract}
One dimensional Stefan problems for a semi-infinite material with temperature dependent thermal coefficients are considered. Existence and uniqueness of solution are obtained imposing a Dirichlet or a Robin type condition at fixed face $x=0$. Moreover, it is  proved that the solution of the problem with the Robin type condition converges to the solution of the problem with the Dirichlet condition at the fixed face. Computational examples are provided.

\medskip

{\noindent \small \textbf{Keywords:} Variable thermal conductivity, variable heat capacity, Stefan problem, temperature-dependent-thermal coefficients, similarity solution}

\end{abstract}

\section{Introduction.}
The one-phase Stefan problem (or Lam\'e-Clapeyron-Stefan problem) for a semi-infinite material is a free boundary problem for the heat equation, which requires the determination
of the temperature distribution $T$ of the liquid phase (melting problem) or
the solid phase (solidification problem) and the evolution of the free boundary $x = s(t)$. Phase change problems appear frequently in industrial processes and other problems of technological interest \cite{AlSo,AMR,ChRa,DHLV,Ke,Lu}.
The Lam\'e-Clapeyron-Stefan problem is non-linear even in its simplest form due to the free boundary conditions. If the thermal coefficients of the material are temperature-dependent, we have a doubly non-linear free boundary problem.
Some other models involving temperature-dependent thermal conductivity can also be found in \cite{BoNaSeTaLibro, BoNaSeTa-ThSci,NaTa,BrNa3,BrNa,Ma,ChSu74,OlSu87,Ro15,Ro18,CST}.

In this paper, we consider two one-phase fusion problems with a temperature - dependent thermal conductivity $k(T)$ and  specific heat $c(T)$. 
In one of them, it is assumed a Dirichlet condition at the fixed face $x=0$ and in the second case a Robin condition is imposed. The mathematical model of the governing process is described as follows:

\begin{align}
& \rho c(T) \pder[T]{t}=\frac{\partial}{\partial x} \left(k(T)\pder[T]{x} \right),& 0<x<s(t), \quad t>0, \label{EcCalor}\\
&  T(0,t)=T_{_0}, &t>0, \label{CondBorde}\\
&  T(s(t),t)=T_{f}, &t>0, \label{TempCambioFase}\\
&  k_0\pder[T]{x}(s(t),t)=-\rho l \dot s(t), &t>0, \label{CondStefan}\\
& s(0)=0,\label{FrontInicial}
\end{align}
where the unknown functions are the temperature $T=T(x,t)$ and the free boundary $x=s(t)$ separating both phases. The parameters $\rho>0$ (density), $l>0$ (latent heat per unit mass), $T_{0}>0$ (temperature imposed at the fixed face $x=0$) and $T_f<T_0$ (phase change temperature at the free boundary $x=s(t)$) are all known constants. The functions $k$ and $c$ are defined as:  
\begin{align}
&k(T)=k_{0}\left(1+\delta\left(\tfrac{T-T_{f}}{T_{0}-T_{f}}\right)^{p}\right)\label{k}\\
&c(T)=c_{0}\left(1+\delta\left(\tfrac{T-T_{f}}{T_{0}-T_{f}}\right)^{p}\right)\label{c},
\end{align}
where $\delta$ and $p$ are given non-negative constants, $k_{0}=k(T_f)$ and $c_{0}=c(T_f)$  are the reference thermal conductivity and the specific heat, respectively.

The problem (\ref{EcCalor})-(\ref{FrontInicial}) was firstly considered in \cite{KSR} where an equivalent ordinary differential problem was obtained. In \cite{BNT}, the existence of an explicit solution of a similarity type by using a double fixed point was given when the thermal coefficients are bounded and Lipschitz functions.

We are interested in obtaining a similarity solution to problem (\ref{EcCalor})-(\ref{FrontInicial}). More precisely, one in which the temperature $T=T(x,t)$ can be written as a function of a single variable.  Through the following change of variables:
\begin{equation}
y(\eta)=\tfrac{T(x,t)-T_{f}}{T_{0}-T_{f}}\geq 0  \label{Y}
\end{equation}
with
\begin{equation}
\eta=\tfrac{x}{2a\sqrt{t}},\quad 0<x<s(t),\quad t>0, \label{eta}
\end{equation}
the phase front moves as
\begin{equation}
s(t)=2a\lambda\sqrt{t} \label{freeboundary}
\end{equation}
where $a^{2}=\frac{k_{0}}{\rho c_{0}}$ (thermal diffusivity) and $\lambda>0$ is a positive parameter to be determined.

It is easy to see that the Stefan problem $\mathrm{(}$\ref{EcCalor}$\mathrm{)}$-(\ref{FrontInicial}) has a similarity solution $(T,s)$ given by:
\begin{align}
&T(x,t)=\left(T_{0}-T_{f}\right)y\left(\tfrac{x}{2a\sqrt{t}}\right)+T_{f},\quad  0<x<s(t), \quad t>0,\label{T} \\
&s(t)=2a\lambda\sqrt{t},\quad\quad t>0\label{s}
\end{align}
if and only if the function $y$ and the parameter $\lambda>0$ satisfy the following ordinary differential problem:
\begin{align}
&2\eta(1+\delta y^p(\eta))y'(\eta)+[(1+\delta y^p(\eta))y'(\eta)]'=0, \quad &0<\eta<\lambda, \label{y}\\
&y(0)=1,\label{cond0}\\
&y(\lambda)=0, \label{condlambda}\\
&y'(\lambda)=-\tfrac{2\lambda}{\Ste} \label{eclambda}
\end{align}
where $\delta\geq 0$, $p\geq 0$ and $\Ste=\tfrac{c_{0}(T_{0}-T_f)}{l}>0$ is the Stefan number.

In \cite{KSR},  the solution to the  ordinary differential problem (\ref{y})-(\ref{eclambda}) was approximated by using shifted Chebyshev polynomials. Although, in this paper was provided the exact solution for the particular cases $p=1$ and $p=2$, the aim of our work is to prove existence and uniqueness of solution  for every $\delta\geq 0$ and $p\geq 0$.
The particular case with $\delta=0$, i.e. with constant thermal coefficients, and $p=1$ was studied in \cite{ChSu74,OlSu87,Ta98,SaTa}

In Section 2, we are going to prove existence and uniqueness of problem (\ref{EcCalor})-(\ref{FrontInicial}) through analysing the ordinary differential problem (\ref{y})-(\ref{eclambda}).


In Section 3, we present a similar problem but with a Robin type condition at the fixed face $x=0$. That is, the temperature condition (\ref{CondBorde}) will be replaced by the following convective condition
\begin{equation}
k(T(0,t))\pder[T]{x}(0,t)=\frac{h}{\sqrt{t}}\left(T(0,t)-T_{0}\right) \label{convectiva}
\end{equation}
where $h>0$ is the thermal transfer coefficient and $T_0$ is the bulk temperature. We prove existence and uniqueness of solution to this problem, similar to those of the preceding section.

Finally, in Section 4,  we study the asymptotic
behaviour when $h\rightarrow +\infty$, that is, we show that the solution of the problem given in Section 3 converges to the solution of the analogous Stefan problem, given in Section 2.

\section{Existence and uniqueness of solution to the problem with Dirichlet condition at the fixed face $x=0$}

We will study the  existence and uniqueness of solution to the problem  (\ref{EcCalor})-(\ref{FrontInicial}) through the ordinary differential problem
(\ref{y})-(\ref{eclambda}).

\begin{lem}\label{ProbAux}
Let $p\geq 0$, $\delta\geq 0$, $\lambda>0$, $y\in C^{\infty}[0,\lambda]$ and $y\geq 0$, then $(y,\lambda)$ is a solution to the ordinary differential problem $(\ref{y})$-$(\ref{eclambda})$ if and only if $\lambda$ is the unique solution to 
\begin{align}\label{7}
f(x)=g,\qquad \qquad x>0,
\end{align}
and $y$ verifies
\begin{align}\label{6}
F(y(\eta))=G(\eta),\qquad\qquad 0<\eta<\lambda,
\end{align}
where 
\begin{align}\label{fg}
g=\tfrac{\mathrm{Ste}}{\sqrt{\pi}}\left( 1+\tfrac{\delta}{p+1}\right),  \qquad &f(x)=x \exp(x^2)\erf(x),\\
F(x)=x+\tfrac{\delta}{p+1}x^{p+1}, \qquad &G(x)=\tfrac{\sqrt{\pi}}{\mathrm{Ste}} \;\lambda \exp(\lambda^2)\left( \erf(\lambda)-\erf(x)\right).\label{FG-temp}
\end{align} 
\end{lem}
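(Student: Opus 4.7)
The plan is to reduce the second-order ODE (\ref{y}) to a first-order separable equation by the substitution $u(\eta):=\bigl(1+\delta y^{p}(\eta)\bigr)y'(\eta)$. Equation (\ref{y}) then reads $u'(\eta)+2\eta\, u(\eta)=0$, which integrates immediately to $u(\eta)=C\exp(-\eta^{2})$ for some constant $C$. To fix $C$ I would use the two conditions at the free boundary: $y(\lambda)=0$ from (\ref{condlambda}) kills the nonlinear factor, so $u(\lambda)=y'(\lambda)=-2\lambda/\Ste$ by (\ref{eclambda}), giving $C=-\tfrac{2\lambda}{\Ste}\exp(\lambda^{2})$.

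Next, observing that the polynomial $F(x)=x+\tfrac{\delta}{p+1}x^{p+1}$ is a primitive of $1+\delta x^{p}$, I would integrate $u$ from $\eta$ to $\lambda$: the left-hand side telescopes to $F(y(\lambda))-F(y(\eta))=-F(y(\eta))$, while the right-hand side produces exactly $\tfrac{\sqrt{\pi}}{\Ste}\lambda\exp(\lambda^{2})\bigl(\mathrm{erf}(\lambda)-\mathrm{erf}(\eta)\bigr)=G(\eta)$. This proves (\ref{6}). Evaluating at $\eta=0$ and invoking $y(0)=1$ gives
\[
F(1)=1+\tfrac{\delta}{p+1}=G(0)=\tfrac{\sqrt{\pi}}{\Ste}\,f(\lambda),
\]
which is equivalent to $f(\lambda)=g$, i.e.\ equation (\ref{7}).

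For the uniqueness of the positive root of $f(x)=g$, I would note that $f(0)=0$, $f(x)\to+\infty$ as $x\to+\infty$, and
\[
f'(x)=\exp(x^{2})\,\mathrm{erf}(x)\,(1+2x^{2})+\tfrac{2x}{\sqrt{\pi}}>0\qquad(x>0),
\]
so $f$ is strictly increasing on $(0,+\infty)$ and the equation $f(x)=g$ has a single positive solution.

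Conversely, assuming $\lambda$ solves (\ref{7}) and $y\in C^{\infty}[0,\lambda]$ satisfies (\ref{6}), I would recover the original problem as follows. Since $F'(x)=1+\delta x^{p}\geq 1>0$ for $x\geq 0$, $F$ is a bijection of $[0,+\infty)$ onto itself, so $F(y(\lambda))=G(\lambda)=0$ forces $y(\lambda)=0$, and $F(y(0))=G(0)=(\sqrt{\pi}/\Ste)f(\lambda)=F(1)$ forces $y(0)=1$. Differentiating (\ref{6}) gives $(1+\delta y^{p})y'=G'(\eta)=-\tfrac{2\lambda}{\Ste}\exp(\lambda^{2}-\eta^{2})$; evaluating at $\eta=\lambda$ returns (\ref{eclambda}), while differentiating once more and using the explicit form of $G'$ recovers the ODE (\ref{y}). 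No genuinely hard step arises: the only care needed is in the bookkeeping of the error-function integration and in the monotonicity argument that delivers uniqueness.
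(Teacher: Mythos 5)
Your proposal is correct and follows essentially the same route as the paper: both reduce (\ref{y}) via the substitution $u=(1+\delta y^{p})y'$ to $u=C\exp(-\eta^{2})$, integrate against the primitive $F$ of $1+\delta x^{p}$, and read off (\ref{7}) from the boundary values; the only cosmetic difference is that you fix the constant $C$ at the free boundary and integrate from $\eta$ to $\lambda$, whereas the paper fixes it at $\eta=0$ and integrates from $0$ to $\eta$. Your explicit monotonicity computation for $f$ and the bijectivity argument for $F$ in the converse are exactly the ingredients the paper invokes (there in Lemma \ref{ExyunProbAux}).
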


\smallskip

\begin{proof}
Let $(y,\lambda)$ be a solution to problem (\ref{y})-(\ref{eclambda}).

Let us define $v(\eta)=\left(1+\delta y^{p}(\eta) \right) y'(\eta)$. Taking into account the ordinary differential equation (\ref{y}) and condition (\ref{cond0}), $v$ can be rewritten as $v(\eta)=(1+\delta)y'(0)\exp(-\eta^2)$. Therefore
\begin{equation}\label{aux}
y'(\eta)+\delta y^p(\eta)y'(\eta)=(1+\delta)y'(0)\exp(-\eta^2).
\end{equation}

If we integrate (\ref{aux}) from $0$ to $\eta$, and using conditions (\ref{cond0})-(\ref{condlambda}) we obtain 
\begin{equation}\label{yalternativa}
y(\eta)+\tfrac{\delta}{p+1}y^{p+1}(\eta)=1+\tfrac{\delta}{p+1}-\tfrac{\sqrt{\pi}}{\mathrm{Ste}}\lambda \exp(\lambda^2)\erf(\eta)
\end{equation}

If we take $\eta=\lambda$ in the above equation, by (\ref{condlambda}), we get (\ref{7}). Furthermore, from (\ref{7}) we can rewrite (\ref{yalternativa}) as (\ref{6}).

Reciprocally, if $(y,\lambda)$ is a solution to (\ref{7})-(\ref{6})  we have
\begin{equation}
y(\eta)=-\tfrac{\delta}{p+1}y^{p+1}(\eta)+\left(1+\tfrac{\delta}{p+1}\right)\left(1-\tfrac{\erf(\eta)}{\erf(\lambda)}\right).
\end{equation}

An easy computation shows that $(y,\lambda)$  is a solution to the ordinary differential problem $\mathrm{(}$\ref{y}$\mathrm{)}$-$\mathrm{(}$\ref{eclambda}$\mathrm{)}$ .
\end{proof}

According to the above result, 
we proceed to show that there exists a unique solution to problem (\ref{7})-(\ref{6}).

\begin{lem}\label{ExyunProbAux}
If $p\geq 0$ and $\delta\geq 0$, then there exists a unique solution $(y,\lambda)$ to the problem  $\mathrm{(}$\ref{7}$\mathrm{)}$-$\mathrm{(}$\ref{6}$\mathrm{)}$ with $\lambda>0$, $y\in C^{\infty}[0,\lambda]$ and $y\geq 0$. 
\end{lem}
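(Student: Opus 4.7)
The plan is to split the problem into two essentially independent pieces: first solve the scalar equation $f(\lambda)=g$ for $\lambda$, then solve $F(y(\eta))=G(\eta)$ pointwise for $y$ using the fact that $F$ is a bijection, and finally verify the regularity, sign, and boundary conditions.

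First I would analyze $f(x)=x\exp(x^2)\erf(x)$ on $(0,\infty)$. Since each of the three factors $x$, $\exp(x^2)$, $\erf(x)$ is positive and strictly increasing on $(0,\infty)$, their product $f$ is strictly increasing and continuous, with $f(0^+)=0$ and $f(x)\to +\infty$ as $x\to +\infty$ (because $\erf(x)\to 1$ and $x\exp(x^2)\to\infty$). Given that $g=\tfrac{\Ste}{\sqrt{\pi}}(1+\tfrac{\delta}{p+1})>0$, the intermediate value theorem together with strict monotonicity yields a unique $\lambda>0$ with $f(\lambda)=g$.

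With this $\lambda$ fixed, I would next study $F(x)=x+\tfrac{\delta}{p+1}x^{p+1}$ on $[0,\infty)$. Its derivative $F'(x)=1+\delta x^{p}\ge 1$ is strictly positive, so $F$ is a $C^\infty$ diffeomorphism of $[0,\infty)$ onto itself, and in particular $F^{-1}$ is $C^\infty$ on $[0,\infty)$. I would then \emph{define} $y(\eta):=F^{-1}(G(\eta))$ for $\eta\in[0,\lambda]$. Because $G$ depends smoothly on $\eta$ (being built from $\erf$ and exponentials) and $F^{-1}$ is smooth, $y\in C^\infty[0,\lambda]$ automatically. Non-negativity of $y$ follows from $G(\eta)\ge 0$ on $[0,\lambda]$, which holds because $\erf$ is increasing and $\eta\le\lambda$, so $F^{-1}(G(\eta))\ge F^{-1}(0)=0$.

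Next I would check the boundary values and thereby the matching with Lemma~\ref{ProbAux}. The condition $y(\lambda)=0$ is immediate since $G(\lambda)=0$ and $F(0)=0$. For $y(0)=1$, the key identity is
\begin{equation*}
G(0)=\tfrac{\sqrt{\pi}}{\Ste}\,\lambda\exp(\lambda^2)\erf(\lambda)=\tfrac{\sqrt{\pi}}{\Ste}\,f(\lambda)=\tfrac{\sqrt{\pi}}{\Ste}\,g=1+\tfrac{\delta}{p+1}=F(1),
\end{equation*}
so that $y(0)=F^{-1}(F(1))=1$. Uniqueness follows from the two injectivity statements already used: $\lambda$ is forced by strict monotonicity of $f$, and then $y(\eta)$ is forced pointwise by invertibility of $F$.

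I do not expect a serious obstacle here; the main technical point is just the monotonicity of $f$, which is cleanest via the product-of-positive-increasing-factors argument rather than differentiation. Everything else reduces to reading off properties of $F$ and $G$ and using the equation $f(\lambda)=g$ to match $G(0)$ with $F(1)$, which is exactly the compatibility needed so that the constructed $y$ satisfies the initial condition $y(0)=1$ demanded in (\ref{6}).
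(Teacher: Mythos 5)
Your proposal is correct and follows essentially the same route as the paper: a unique $\lambda>0$ from the strict monotonicity of $f$ with $f(0)=0$ and $f(+\infty)=+\infty$, followed by the pointwise definition $y(\eta)=F^{-1}(G(\eta))$ using the invertibility of the increasing function $F$ and the non-negativity of $G$. The only addition is your explicit check that $G(0)=F(1)$ forces $y(0)=1$ (the paper defers this observation to Remark~\ref{2.3}), which is a harmless and welcome consistency verification.
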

\begin{proof}
In virtue that $f$ given by (\ref{fg}) is an increasing function such that $f(0)=0$ and $f(+\infty)=+\infty$, there exists a unique solution $\lambda>0$ to equation $\mathrm{(}$\ref{7}$\mathrm{)}$.
Now, for this $\lambda>0$, it is easy to see that $F$ given by (\ref{FG-temp}) is an increasing function, so that we can define $F^{-1}:[0,+\infty)\to [0,+\infty)$. 
As $G$ defined by (\ref{FG-temp}) is a positive function, we have that 
there exists a unique solution $y\in C^{\infty}[0,\lambda]$ of equation $\mathrm{(}$\ref{6}$\mathrm{)}$ given by 
\begin{equation}
y(\eta)=F^{-1}\left(G(\eta)\right), \qquad  0 < \eta < \lambda.
\end{equation}

\end{proof}

\begin{rem} \label{2.3}
On one hand we have that $F$ is an increasing function with $F(0)=0$ and $F(1)=1+\frac{\delta}{p+1}$.  On the other hand, $G$ is a decreasing function with  $G(0)=1+\frac{\delta}{p+1}$ and $G(\lambda)=0$. Then it follows that $0\leq y(\eta)\leq 1 $, for $0<\eta<\lambda$.
\end{rem}

From the above lemmas we are able to claim the following result:

\begin{thm} \label{ExistenciaDirichlet}
The Stefan problem governed by $($\ref{EcCalor}$)$-$($\ref{FrontInicial}$)$ has a unique similarity type solution given by $($\ref{T}$)$-$($\ref{s}$)$ where $(y,\lambda)$ is the unique solution to the functional problem $($\ref{7}$)$-$($\ref{6}$)$.
\end{thm}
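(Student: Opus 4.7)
The plan is to assemble Theorem~\ref{ExistenciaDirichlet} as a direct consequence of the reduction performed in the introduction together with Lemmas~\ref{ProbAux} and~\ref{ExyunProbAux}; essentially, all the real work has been done, and the theorem is a bookkeeping step that records the equivalence chain
\[
\text{similarity solution }(T,s) \; \Longleftrightarrow \; \text{ODE solution }(y,\lambda)\text{ of }(\ref{y})\text{--}(\ref{eclambda}) \; \Longleftrightarrow \; \text{functional solution }(y,\lambda)\text{ of }(\ref{7})\text{--}(\ref{6}).
\]

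First, I would recall the computation already carried out in the introduction showing that, under the change of variables (\ref{Y})--(\ref{freeboundary}), a pair $(T,s)$ of the similarity form (\ref{T})--(\ref{s}) solves the Stefan problem (\ref{EcCalor})--(\ref{FrontInicial}) if and only if the associated pair $(y,\lambda)$ solves the ODE problem (\ref{y})--(\ref{eclambda}). Thus searching for a similarity solution of the PDE problem is exactly equivalent to searching for a solution of the ODE problem, and uniqueness at one level transfers to uniqueness at the other.

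Second, I would invoke Lemma~\ref{ProbAux} to replace the ODE problem (\ref{y})--(\ref{eclambda}) by the functional problem (\ref{7})--(\ref{6}): the lemma gives an \emph{if and only if} equivalence in the class $y\in C^{\infty}[0,\lambda]$, $y\geq 0$, $\lambda>0$, which is precisely the regularity class needed for $T$ as defined by (\ref{T}) to be a classical solution of (\ref{EcCalor})--(\ref{FrontInicial}). Then Lemma~\ref{ExyunProbAux} supplies existence and uniqueness of $(y,\lambda)$ for the functional problem; chaining the two equivalences gives existence and uniqueness of the similarity solution.

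The only delicate point in this chain is making sure the equivalences are genuine equivalences and not just one-way implications; in particular, one must check that the candidate $y=F^{-1}\circ G$ produced by Lemma~\ref{ExyunProbAux} really lies in $C^{\infty}[0,\lambda]$ with $y\geq 0$, so that the plug-in $T(x,t)=(T_0-T_f)\,y(x/(2a\sqrt{t}))+T_f$ yields a classical solution of the PDE system. This is immediate from Remark~\ref{2.3}, which pins $y$ in $[0,1]$, together with the smoothness of $F^{-1}$ and $G$ on the relevant intervals. Once this is noted, the theorem follows by simply stringing together the three observations above, with no additional computation required.
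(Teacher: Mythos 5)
Your proposal follows exactly the route the paper takes: the paper offers no explicit proof of Theorem~\ref{ExistenciaDirichlet}, simply presenting it as an immediate consequence of the equivalence stated in the introduction together with Lemmas~\ref{ProbAux} and~\ref{ExyunProbAux}, which is precisely the chain of equivalences you assemble. Your additional remark that the regularity and nonnegativity of $y=F^{-1}\circ G$ must be checked (via Remark~\ref{2.3}) is a sensible point of care that the paper leaves implicit.
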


\begin{rem}
In virtue of Remark \ref{2.3} and Theorem \ref{ExistenciaDirichlet} we have that 
\begin{equation*}
T_f<T(x,t)<T_0,\qquad \qquad 0<x<s(t),\quad t>0.
\end{equation*}
\end{rem}

\begin{rem} For the particular case $p=1$, $\delta\geq 0$,
the solution to the problem (\ref{7})-(\ref{6}) is given by
\begin{align}
&y(\eta)=\tfrac{1}{\delta} \left[\sqrt{(1+\delta)^2-\delta(2+\delta)\tfrac{\erf(\eta)}{\erf(\lambda)}}-1 \right], \qquad 0<\eta<\lambda,\label{6-1}
\end{align}
where $\lambda$ verifies
\begin{align}
&\lambda \exp(\lambda^2)\erf(\lambda)=\tfrac{\mathrm{Ste}}{\sqrt{\pi}}\left( 1+\tfrac{\delta}{2}\right).\label{7-1}
\end{align}
\begin{proof}
If $p=1$ the equation (\ref{6}) is given by
\begin{equation}
y^{2}(\eta)+\tfrac{2}{\gamma}y(\eta)-(1+\tfrac{2}{\gamma})\left[1-\tfrac{\erf(\eta)}{\erf(\lambda)}\right]=0
\end{equation}
which has a unique positive solution obtained by the expression (\ref{6-1}).
\end{proof}
\end{rem}

\vspace{0.4cm}

In view of Lemmas \ref{ExyunProbAux} and \ref{2.3}, we can compute the solution $(y,\lambda)$ to the ordinary differential problem $(\ref{y})$-$(\ref{eclambda})$, by using its functional formulation.

In Figure \ref{Fig:yeta}, for different values of $p$, we plot the solution  $(y,\lambda)$  to the problem $(\ref{7})$-$(\ref{6})$. In order to compare the  obtained solution $y$,  we extend them by zero for every $\eta>\lambda$. We assume $\delta=5$ and $\Ste=0.5$. It must be pointed out that the choice for $\Ste$ is due to the fact that  for most phase-change material candidates over a realistic temperature, the Stefan number will not exceed 1 (see \cite{So}).

\begin{figure}[h]
\begin{center}
\includegraphics[scale=0.22]{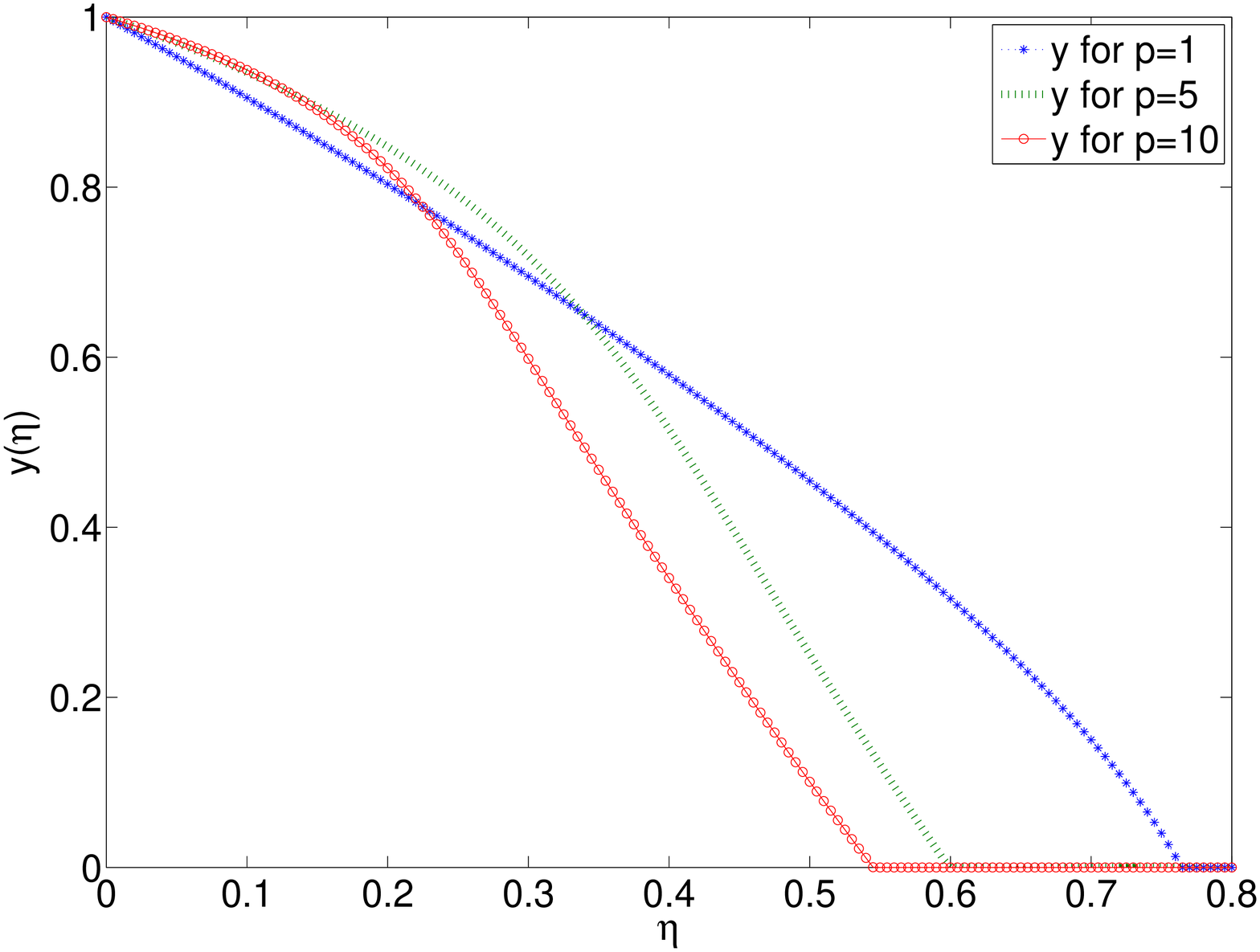}
\caption{Plot of function $y$ for different values of \mbox{$p=1,5,10$},  fixing $\delta=5$ and  $\Ste=0.5$.}
\label{Fig:yeta}
\end{center}
\end{figure}

\medskip
Although it can be analytically deduced from equation $(\ref{7})$,  we can observe graphically that as $p$ increases, the value of $\lambda$ decreases.

In view of Theorem \ref{ProbAux}, we can also plot the solution $(T,s)$ to the 
 problem  $\mathrm{(}$\ref{y}$\mathrm{)}$-$\mathrm{(}$\ref{eclambda}$\mathrm{)}$. 
 
In Figure \ref{Fig:ColorTemp} we present a colormap for the temperature $T=T(x,t)$ extending it by zero for $x>s(t)$.

\begin{figure}[h!!!!]
\begin{center}
\includegraphics[scale=0.22]{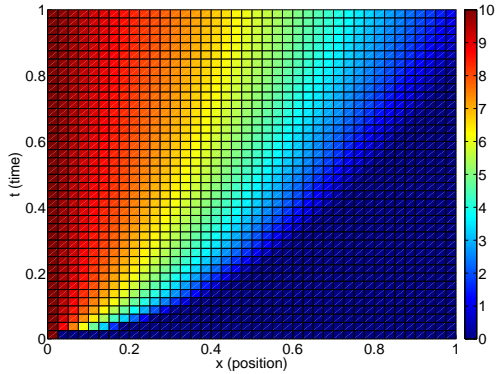}
\caption{Colormap for the temperature $T=T(x,t)$ function fixing $\delta=1$, $p=1$, $\Ste=0.5$,  $T_f=0$, $T_0=10$ and  $a=1$}
\label{Fig:ColorTemp}
\end{center}
\end{figure}

\newpage

\section{Existence and uniqueness of solution to the problem with Robin condition at the fixed face $x=0$}

In this section we are going to consider a Stefan problem with a convective boundary condition at the fixed face instead of a Dirichlet one. This heat input is the true relevant
physical condition due to the fact that it establishes that the incoming flux
at the fixed face is proportional to the difference between the temperature at
the surface of the material and the ambient temperature to be imposed.

Let us consider the free boundary problem given by $\mathrm{(}$\ref{EcCalor}$\mathrm{)}$, $\mathrm{(}$\ref{TempCambioFase}$\mathrm{)}$-$\mathrm{(}$\ref{FrontInicial}$\mathrm{)}$ and the convective condition $\mathrm{(}$\ref{convectiva}$\mathrm{)}$ instead of the temperature condition $\mathrm{(}$\ref{CondBorde}$\mathrm{)}$ at the fixed face $x=0$.

The temperature-dependent thermal conductivity $k(T)$ and the specific heat $c(T)$ are given by $\mathrm{(}$\ref{k}$\mathrm{)}$ and $\mathrm{(}$\ref{c}$\mathrm{)}$, respectively.

As in the above section, we are searching a similarity type solution. If we define   
the change of variables as $\mathrm{(}$\ref{Y}$\mathrm{)}$-$\mathrm{(}$\ref{eta}$\mathrm{)}$, the phase front moves as $\mathrm{(}$\ref{freeboundary}$\mathrm{)}$
where $a^{2}=\frac{k_{0}}{\rho c_{0}}$ (thermal diffusivity) and $\lambda_\gamma$ is a positive parameter to be determined.

It follows that $(T_\gamma,s_\gamma)$ is a solution to $\mathrm{(}$\ref{EcCalor}$\mathrm{)}$, $\mathrm{(}$\ref{TempCambioFase}$\mathrm{)}$-$\mathrm{(}$\ref{FrontInicial}$\mathrm{)}$ and $\mathrm{(}$\ref{convectiva}$\mathrm{)}$ if and only if the function $y_\gamma$ defined by (\ref{y}) and the parameter $\lambda_\gamma>0$ given by (\ref{freeboundary}) satisfy $\mathrm{(}$\ref{y}$\mathrm{)}$, $\mathrm{(}$\ref{condlambda}$\mathrm{)}$, $\mathrm{(}$\ref{eclambda}$\mathrm{)}$ and
\begin{align}
\left(1+\delta y^{p}(0)\right) y'(0)=\gamma \left(y(0)-1\right)\label{ecconvectiva}
\end{align}
where $\delta\geq 0$, $p\geq 0$, 
\begin{equation}
\gamma=2\Bi, \quad \text{and}\quad \Bi=\frac{ha }{k_{0}}
\end{equation}
 where $\Bi>0$ is the generalized Biot number.

With a few slight changes on the results obtained in the previous section, the following assertions can be established:

\begin{lem}\label{ProbAuxConv}
Let $p\geq 0$, $\delta\geq 0$, $\gamma>0$, $\lambda_\gamma>0$, $y_\gamma\in C^{\infty}[0,\lambda_\gamma]$ and $y_\gamma\geq 0$, then $(y_\gamma,\lambda_\gamma)$ is a solution to the ordinary differential problem $\mathrm{(}$\ref{y}$\mathrm{)}$, $\mathrm{(}$\ref{condlambda}$\mathrm{)}$, $\mathrm{(}$\ref{eclambda}$\mathrm{)}$ and
$\mathrm{(}$\ref{ecconvectiva}$\mathrm{)}$ if and only if $\lambda_\gamma$ is the unique  solution to the following equation
\begin{align}\label{7bis}
F(\beta_\gamma(x))=f(x),\qquad \qquad x>0,
\end{align}
and $y_\gamma$ verifies
\begin{align}
F(y_\gamma (\eta))=G_\gamma(\eta),\qquad \qquad 0< \eta<\lambda_\gamma
\label{6bis}
\end{align}
where   $f$ and $F$ are given by $(\ref{fg})$ and $(\ref{FG-temp})$, respectively and
\begin{align}\label{beta}
\beta_\gamma(x)= 1-\tfrac{2x\exp\left(x^{2}\right)}{\gamma \, \emph{\Ste}}, \qquad 0\leq x\leq \lambda_0=\beta_\gamma^{-1}(0),\\ 
G_\gamma(x)=\tfrac{\lambda_\gamma \exp\left(\lambda_\gamma^{2}\right)\sqrt{\pi}}{\emph{Ste}}\left(\erf(\lambda_\gamma)-\erf(x)\right), \quad 0<x<\lambda_\gamma.\label{Ggamma}
\end{align}
\end{lem}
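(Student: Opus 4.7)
The plan is to follow the same integrating-factor strategy used in Lemma~\ref{ProbAux}, with the Dirichlet datum $y(0)=1$ replaced by the convective relation (\ref{ecconvectiva}); the key new ingredient is that $y_\gamma(0)$ is no longer prescribed directly but is coupled to $\lambda_\gamma$ through the Robin condition, and this coupling is exactly what is encoded in the function $\beta_\gamma$.

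For the forward implication I would introduce $v(\eta) = (1+\delta y_\gamma^p(\eta))\, y_\gamma'(\eta)$, observe that the ODE (\ref{y}) forces $v'(\eta) + 2\eta v(\eta) = 0$, and hence $v(\eta) = v(0)\exp(-\eta^2)$. The convective condition (\ref{ecconvectiva}) gives $v(0) = \gamma(y_\gamma(0) - 1)$, while (\ref{condlambda}) and (\ref{eclambda}) together give $v(\lambda_\gamma) = y_\gamma'(\lambda_\gamma) = -2\lambda_\gamma/\Ste$. Matching the two expressions at $\eta = \lambda_\gamma$ identifies
\[
y_\gamma(0) = 1 - \frac{2\lambda_\gamma \exp(\lambda_\gamma^2)}{\gamma\,\Ste} = \beta_\gamma(\lambda_\gamma).
\]
Integrating $v(\eta)=v(0)\exp(-\eta^2)$ from $0$ to $\eta$ then yields
\[
F(y_\gamma(\eta)) - F(y_\gamma(0)) = \gamma(y_\gamma(0)-1)\tfrac{\sqrt{\pi}}{2}\erf(\eta);
\]
specialising to $\eta = \lambda_\gamma$, using $F(0)=0$, and substituting the value of $y_\gamma(0)-1$ obtained above, gives $F(\beta_\gamma(\lambda_\gamma)) = f(\lambda_\gamma)$, which is (\ref{7bis}). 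Plugging this back into the integrated identity produces $F(y_\gamma(\eta)) = G_\gamma(\eta)$, which is (\ref{6bis}).

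For the converse, given $(y_\gamma, \lambda_\gamma)$ satisfying (\ref{7bis})--(\ref{6bis}) I would differentiate $F(y_\gamma(\eta)) = G_\gamma(\eta)$ to reconstruct the relation $v(\eta) = -\frac{\lambda_\gamma \exp(\lambda_\gamma^2)}{\Ste}\exp(-\eta^2)$, from which the ODE (\ref{y}) follows. I would then read off (\ref{condlambda}) from $G_\gamma(\lambda_\gamma)=0$, obtain (\ref{eclambda}) by evaluating the derivative at $\eta = \lambda_\gamma$, and derive (\ref{ecconvectiva}) by evaluating at $\eta = 0$ and using (\ref{7bis}) to swap $\lambda_\gamma \exp(\lambda_\gamma^2)/\Ste$ for the expression $\tfrac{\gamma}{2}(1-y_\gamma(0))$. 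The step I expect to be most delicate is the admissibility bookkeeping: $\beta_\gamma$ is non-negative only on $[0,\lambda_0]$, and one must verify that the candidate $\lambda_\gamma$ produced by (\ref{7bis}) lies in that interval, so that $y_\gamma(0) = \beta_\gamma(\lambda_\gamma) \geq 0$ is consistent with $y_\gamma \geq 0$. This is the point where the Robin case genuinely departs from the Dirichlet one, because now the decreasing function $F\circ \beta_\gamma$ must be intersected with the increasing function $f$ within the correct range, and this is precisely the issue that the next existence-and-uniqueness lemma will have to resolve.
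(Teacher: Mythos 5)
Your argument is correct and is essentially the paper's own proof: the same integrating factor reduces (\ref{y}) to $v(\eta)=v(0)\exp(-\eta^2)$, and matching the value of $v$ at $\eta=0$ (from (\ref{ecconvectiva})) with its value at $\eta=\lambda_\gamma$ (from (\ref{condlambda})--(\ref{eclambda})) yields $y_\gamma(0)=\beta_\gamma(\lambda_\gamma)$, exactly as the paper obtains by differentiating (\ref{6bis}) at $\eta=0$. The only caveat is a normalization constant: your integrated identity actually specializes to $F(\beta_\gamma(\lambda_\gamma))=\tfrac{\sqrt{\pi}}{\Ste}\,f(\lambda_\gamma)$ rather than $F(\beta_\gamma(\lambda_\gamma))=f(\lambda_\gamma)$, but this factor of $\tfrac{\sqrt{\pi}}{\Ste}$ is already absent from (\ref{7bis}) as printed (compare with $G_\gamma(0)$ and with the Dirichlet equation (\ref{7})), so the discrepancy lies in the paper's stated equation, not in your method.
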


\begin{proof}
Let $(y_\gamma,\lambda_\gamma)$ be a solution to problem (\ref{y}), (\ref{condlambda}), (\ref{eclambda}) and
(\ref{ecconvectiva}).

Let us define $w(\eta)=\left(1+\delta y_\gamma^{p}(\eta) \right) y_\gamma'(\eta)$. Taking into account the ordinary differential equation (\ref{y}) and the conditions (\ref{condlambda}), (\ref{ecconvectiva}), $w$ can be rewritten as $w(\eta)=y_\gamma'(\lambda_\gamma)\exp(\lambda_\gamma^2)\exp(-\eta^2)$. Therefore
\begin{equation}\label{aux-conv}
y_\gamma'(\eta)+\delta y_\gamma^p(\eta)y_\gamma'(\eta)=y'_\gamma(\lambda_\gamma)\exp(\lambda_\gamma^2)\exp(-\eta^2).
\end{equation}

If we integrate (\ref{aux}) from $\eta$ to $\lambda_\gamma$ and using conditions (\ref{condlambda}), (\ref{eclambda}) and
(\ref{ecconvectiva}) we obtain that $y_\gamma$ verifies (\ref{6bis}).

If we take $\eta=0$ in (\ref{6bis}) we  get
\begin{equation}\label{eqaux1}
y_\gamma(0)+\tfrac{\delta}{p+1}y_\gamma^{p+1}(0)=\tfrac{\sqrt{\pi}}{\mathrm{Ste}}\lambda_\gamma \exp(\lambda_\gamma^2)\erf(\lambda_\gamma).
\end{equation}

Furthermore, if we differentiate equation (\ref{6bis}) and computing this derivative at $\eta=0$ we obtain:
\begin{equation}\label{eqaux2}
y'_\gamma(0)+\delta y_\gamma^{p}(0)y'_\gamma(0)=-\tfrac{2\lambda_\gamma \exp(\lambda_\gamma^2)}{\mathrm{Ste}}
\end{equation}

From (\ref{ecconvectiva}) and (\ref{eqaux2}) we obtain
\begin{equation} \label{ygamma-0}
y_\gamma(0)=1-\tfrac{2\lambda_\gamma \exp(\lambda_\gamma^2)}
{\mathrm{Ste}}=\beta(\lambda_\gamma)\geq 0
\end{equation} and therefore (\ref{7bis}) holds.

Reciprocally, if $(y_\gamma,\lambda_\gamma)$ is a solution to (\ref{7bis})-(\ref{6bis}), an easy computation shows that $(y_\gamma,\lambda_\gamma)$ verifies (\ref{y}), (\ref{condlambda}), (\ref{eclambda}) and
(\ref{ecconvectiva}).

\end{proof}

\begin{rem}
The notations $\lambda_\gamma$ and $ y_\gamma$ are adopted in order to emphasize the dependence of the solution to problem $\mathrm{(}$\ref{y}$\mathrm{)}$, $\mathrm{(}$\ref{condlambda}$\mathrm{)}$, $\mathrm{(}$\ref{eclambda}$\mathrm{)}$ and $\mathrm{(}$\ref{ecconvectiva}$\mathrm{)}$ on $\gamma$, although it also depends on $p$ and $\delta$. This fact is going to facilitate the subsequent analysis of the asymptotic behaviour of $y_\gamma$ when $\gamma \to\infty$ $\left( h \to \infty\right)$ to be presented in Section \ref{sec_Conv}.
\end{rem}

\begin{lem}\label{ExyunProbAux-Conv}
If $p\geq 0$, $\delta\geq 0$ and $\gamma>0$, then there exists a unique solution $(y_\gamma,\lambda_\gamma)$ to the problem  $(\ref{7bis})$-$(\ref{6bis})$ with $\lambda_\gamma>0$, $y_\gamma\in C^{\infty}[0,\lambda_\gamma]$ and $y_\gamma\geq 0$. 
\end{lem}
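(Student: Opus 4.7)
The plan is to mirror the two-stage strategy of Lemma \ref{ExyunProbAux}, decoupling the scalar equation (\ref{7bis}) for $\lambda_\gamma$ from the pointwise equation (\ref{6bis}) for $y_\gamma$. First I would extract $\lambda_\gamma$ from (\ref{7bis}); then, with $\lambda_\gamma$ in hand, the profile $y_\gamma$ is recovered by inverting $F$.

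For the first stage I would study the auxiliary function $H(x)=F(\beta_\gamma(x))-f(x)$ on the interval $[0,\lambda_0]$, where $\lambda_0=\beta_\gamma^{-1}(0)$. The quantity $2x\exp(x^{2})/(\gamma\,\Ste)$ is strictly increasing on $[0,\infty)$ and takes the values $0$ and $1$ at $x=0$ and $x=\lambda_0$ respectively, so $\beta_\gamma$ is continuous and strictly decreasing on $[0,\lambda_0]$ with $\beta_\gamma(0)=1$ and $\beta_\gamma(\lambda_0)=0$. Because $F$ from (\ref{FG-temp}) is continuous and strictly increasing on $[0,\infty)$ with $F(0)=0$ and $F(1)=1+\tfrac{\delta}{p+1}$, the composition $F\circ\beta_\gamma$ is continuous and strictly decreasing on $[0,\lambda_0]$, mapping this interval onto $[0,1+\tfrac{\delta}{p+1}]$ in reverse. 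Combining this with the fact that $f$ given by (\ref{fg}) is continuous and strictly increasing with $f(0)=0$, I obtain $H(0)=1+\tfrac{\delta}{p+1}>0$ and $H(\lambda_0)=-f(\lambda_0)<0$, and $H$ is the sum of two strictly decreasing continuous functions. The intermediate value theorem then yields a unique $\lambda_\gamma\in(0,\lambda_0)$ with $H(\lambda_\gamma)=0$, i.e.\ satisfying (\ref{7bis}).

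For the second stage, fix this $\lambda_\gamma$ and define $y_\gamma(\eta):=F^{-1}(G_\gamma(\eta))$ for $0\le\eta\le\lambda_\gamma$. Since $F$ is a $C^{\infty}$ strictly increasing bijection of $[0,\infty)$ with $F'(x)=1+\delta x^{p}\geq 1$, its inverse $F^{-1}:[0,\infty)\to[0,\infty)$ is well-defined and smooth on $(0,\infty)$, continuous at $0$, and $F^{-1}(0)=0$. The function $G_\gamma$ defined in (\ref{Ggamma}) is $C^{\infty}$ on $[0,\lambda_\gamma]$, strictly positive on $[0,\lambda_\gamma)$, and vanishes at $\lambda_\gamma$. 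Hence $y_\gamma$ is the unique solution of (\ref{6bis}), satisfies $y_\gamma\ge 0$ throughout, and inherits the required regularity from $F^{-1}\circ G_\gamma$. Uniqueness of the pair follows because $\lambda_\gamma$ is uniquely determined in step one and $y_\gamma$ is then forced by the strict monotonicity of $F$.

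I expect the main obstacle to be the strict monotonicity of $F\circ\beta_\gamma$ on the correct interval and the verification that $\lambda_\gamma<\lambda_0$, which together guarantee that $y_\gamma(0)=\beta_\gamma(\lambda_\gamma)\ge 0$ (ensuring physical admissibility, cf.\ (\ref{ygamma-0})); once these properties are established, the remaining steps reduce to routine appeals to the intermediate value theorem and the implicit-function/inverse-function structure already used in Lemma \ref{ExyunProbAux}. A minor technical point is smoothness of $F^{-1}$ at the origin when $p\in(0,1)$ is non-integer, but since $F'(0)=1\neq 0$ the inverse remains smooth at $0$, so this does not cause trouble.
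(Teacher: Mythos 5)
Your proposal is correct and follows essentially the same two-stage argument as the paper: first obtaining the unique $\lambda_\gamma\in(0,\lambda_0)$ from the monotonicity of $f$ (increasing) against $F\circ\beta_\gamma$ (decreasing) on $[0,\lambda_0]$, and then recovering $y_\gamma=F^{-1}(G_\gamma)$ from the invertibility of $F$. Your version is in fact slightly more explicit than the paper's (e.g.\ the intermediate value argument via $H$ and the correct value $F(1)=1+\tfrac{\delta}{p+1}$, where the paper's proof contains a small typo).
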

\begin{proof}
On one hand, the function $f$ given by (\ref{fg}) is an increasing function such that $f(0)=0$ and \mbox{$f(\lambda_0)>0$} with $\lambda_0=\beta_\gamma^{-1}(0)$. 
On the other hand,  $F(\beta_\gamma)$  with $F$ given by (\ref{FG-temp}) and $\beta_\gamma$ given by (\ref{beta}), is a decreasing function for $0\leq x \leq \lambda_0$.
Notice that $F(\beta_\gamma(0))=F(1)=\tfrac{\mathrm{Ste}}{\sqrt{\pi}}\left( 1+\tfrac{\delta}{p+1}\right)$ and $F(\beta_\gamma(\lambda_0))=F(0)=0$. Therefore we can conclude that there exists a unique $0<\lambda_\gamma<\lambda_0$ that verifies (\ref{7bis}).

Now, for this $\lambda_\gamma>0$, it is easy to see that $F$ is an increasing function, so that we can define $F^{-1}:[0,+\infty)\to [0,+\infty)$. 
As $G_\gamma$ given by (\ref{Ggamma}) is a positive function, we have that 
there exists a unique solution $y\in C^{\infty}[0,\lambda_\gamma]$ of equation (\ref{6bis}) given by 
\begin{equation}
y_\gamma(\eta)=F^{-1}\left(G_\gamma(\eta)\right), \qquad  0 < \eta <\lambda_\gamma.
\end{equation}

\end{proof}

\begin{rem} \label{2.3-Conv}
On one hand we have that $F$ is an increasing function with $F(0)=0$ and $F(1)=1+\frac{\delta}{p+1}$.  On the other hand, $G_\gamma$ is a decreasing function with  $G_\gamma(0)=\lambda_\gamma \exp(\lambda_\gamma^2)\erf(\lambda_\gamma)$ and $G_\gamma(\lambda_\gamma)=0$. Then $y_\gamma$ is a decreasing function and due to (\ref{7bis}) we obtain
$$y_\gamma(0)=F^{-1}(G_\gamma(0))=\beta_\gamma(\lambda_\gamma)=1-\tfrac{2\lambda_\gamma\exp\left(\lambda_\gamma^{2}\right)}{\gamma \, \mathrm{\Ste}}<1.$$

Then it follows that $0\leq y_\gamma(\eta)\leq 1 $ for $0<\eta<\lambda_\gamma$.
\end{rem}

Finally, from the above lemmas we are able to claim the following result:

\begin{thm} \label{ExistenciaConvectiva}
The Stefan problem governed by $(\ref{EcCalor}), (\ref{TempCambioFase})$-$(\ref{FrontInicial})$ and $(\ref{convectiva})$ has a unique similarity type solution given by $($\ref{T}$)$-$($\ref{s}$)$ where $(y_\gamma,\lambda_\gamma)$ is the unique solution to the functional problem $(\ref{7bis})$-$(\ref{6bis})$.
\end{thm}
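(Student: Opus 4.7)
The plan is to assemble Theorem~\ref{ExistenciaConvectiva} as a direct consequence of the reductions and the auxiliary results already established in Section~3, rather than attack the original PDE problem from scratch. The strategy mirrors the proof of Theorem~\ref{ExistenciaDirichlet}, so the main work is to check that each step of the chain of equivalences is indeed bijective in both directions.

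First, I would invoke the similarity reduction already spelled out just before Lemma~\ref{ProbAuxConv}: under the change of variables $(\ref{Y})$--$(\ref{eta})$ and the ansatz $(\ref{freeboundary})$, a pair $(T_\gamma,s_\gamma)$ of the stated regularity solves the free-boundary problem $(\ref{EcCalor}),(\ref{TempCambioFase})$--$(\ref{FrontInicial}),(\ref{convectiva})$ if and only if the corresponding pair $(y_\gamma,\lambda_\gamma)$ satisfies the ordinary differential problem $(\ref{y}),(\ref{condlambda}),(\ref{eclambda}),(\ref{ecconvectiva})$ with $\lambda_\gamma>0$ and $y_\gamma\in C^{\infty}[0,\lambda_\gamma]$. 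This reduction is purely algebraic (substitution and the chain rule) and was already asserted in the setup of the section, so at the level of the theorem I would just quote it.

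Second, I would chain Lemma~\ref{ProbAuxConv} and Lemma~\ref{ExyunProbAux-Conv}: by Lemma~\ref{ProbAuxConv}, $(y_\gamma,\lambda_\gamma)$ solves the ODE problem if and only if it solves the functional problem $(\ref{7bis})$--$(\ref{6bis})$; by Lemma~\ref{ExyunProbAux-Conv}, the functional problem has a unique solution $(y_\gamma,\lambda_\gamma)$ with $\lambda_\gamma>0$, $y_\gamma\in C^{\infty}[0,\lambda_\gamma]$ and $y_\gamma\geq 0$. Composing the three bijections (similarity reduction, Lemma~\ref{ProbAuxConv}, Lemma~\ref{ExyunProbAux-Conv}) yields a unique similarity-type solution $(T_\gamma,s_\gamma)$ to the original Stefan problem, explicitly recovered through the formulas $(\ref{T})$--$(\ref{s})$ (with $\lambda$ and $y$ replaced by $\lambda_\gamma$ and $y_\gamma$).

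Finally, I would remark that the nonnegativity $y_\gamma\geq 0$ obtained from Lemma~\ref{ExyunProbAux-Conv} together with the bound $y_\gamma\leq 1$ from Remark~\ref{2.3-Conv} ensures that $T_f\leq T_\gamma(x,t)\leq T_0$, so the thermal coefficients $k(T_\gamma),c(T_\gamma)$ are well defined and strictly positive along the constructed solution, which is needed for the equivalence in the similarity reduction to be legitimate. The main (and really only) potential obstacle is precisely this consistency check: one must be sure that the $y_\gamma$ produced by Lemma~\ref{ExyunProbAux-Conv} stays in the range where $(\ref{k}),(\ref{c})$ make physical sense and where the formal manipulations carried out to derive $(\ref{7bis})$--$(\ref{6bis})$ (in particular, dividing by $\erf(\lambda_\gamma)$ and inverting $F$) are justified. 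Both requirements follow from $0\leq y_\gamma\leq 1$ and $\lambda_\gamma>0$, which is exactly what Remark~\ref{2.3-Conv} provides, so once those facts are cited the proof is essentially a one-line assembly.
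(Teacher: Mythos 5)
Your proposal is correct and follows exactly the route the paper intends: the paper gives no explicit proof of Theorem~\ref{ExistenciaConvectiva}, simply asserting it "from the above lemmas," i.e.\ the similarity reduction stated before Lemma~\ref{ProbAuxConv} chained with Lemmas~\ref{ProbAuxConv} and~\ref{ExyunProbAux-Conv}. Your additional consistency check via Remark~\ref{2.3-Conv} (that $0\leq y_\gamma\leq 1$ keeps the coefficients well defined) is a sensible bonus but does not change the argument.
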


Taking into account  Lemmas \ref{ProbAuxConv} and \ref{ExyunProbAux-Conv} we compute the solution $(y_\gamma,\lambda_\gamma)$ to the ordinary differential problem  $(\ref{y})$, $(\ref{condlambda})$, $(\ref{eclambda})$ and
$(\ref{ecconvectiva})$, using its functional formulation $(\ref{7bis})$-$(\ref{6bis})$. 
Figure \ref{Fig:yetaConv1} shows the function $y_\gamma$ for a fixed $\delta=5$, $\gamma=50$, $\Ste=0.5$, varying $p=1,5,10$. 
As it was made for the problem with a Dirichlet condition at the fixed face, the solution $y_\gamma$ is extended by zero for every $\eta>\lambda_\gamma$.

\begin{figure}[h!!!]
\begin{center}
\includegraphics[scale=0.22]{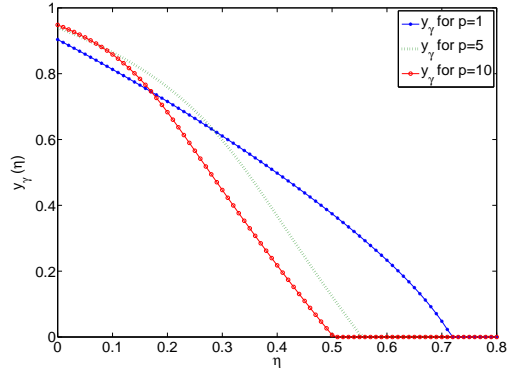}
\caption{Plot of function $y$ for different values of \mbox{$p=1,5,10$},  fixing $\delta=5$, $\gamma=50$ and  $\Ste=0.5$.}
\label{Fig:yetaConv1}
\end{center}
\end{figure}

\begin{figure}[h!!!]
\begin{center}
\includegraphics[scale=0.22]{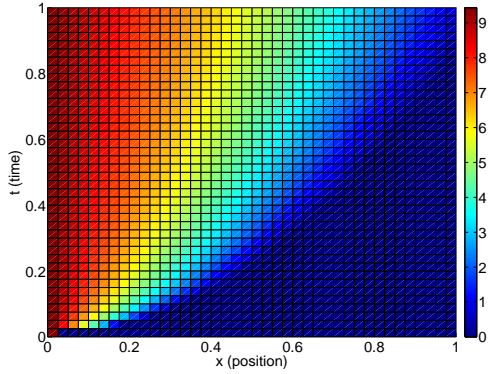}
\caption{Colormap for the temperature $T=T(x,t)$ function fixing $\delta=1$,$\gamma=50$, $p=1$, $\Ste=0.5$,  $T_f=0$, $T_0=10$ and  $a=1$}
\label{Fig:ColorConv}
\end{center}
\end{figure}

\medskip

Applying Theorem \ref{ProbAuxConv}, we can also plot the solution $(T_\gamma,s_\gamma)$ to the 
 problem  $(\ref{EcCalor}), (\ref{TempCambioFase})$-$(\ref{FrontInicial})$ and $(\ref{convectiva})$. In Figure \ref{Fig:ColorConv} we present a colormap for the temperature $T_\gamma=T_\gamma(x,t)$ extending it by zero for $x>s_\gamma(t)$.
\newpage
\section{Asymptotic behaviour}\label{sec_Conv}
Now, we will show that if  the coefficient $\gamma$, that characterizes the heat transfer at the fixed face,  goes to infinity then the solution to the problem with the Robin type condition  $(\ref{EcCalor}),(\ref{TempCambioFase})$-$(\ref{FrontInicial})$ and $(\ref{convectiva})$  converges to the solution to the problem $(\ref{EcCalor})$-$(\ref{FrontInicial})$, with a Dirichlet condition at the fixed face $x=0$.

In order to get the convergence it will be necessary to prove the following preliminary result:

\begin{lem}\label{ConvergenciaLambda}
Let $\gamma>0$, $p\geq 0$ and $\delta>0$ be. If $\lambda_\gamma$ is the unique solution to equation $(\ref{7bis})$ and $\lambda$ is the unique solution to equation $(\ref{7})$, then the sequence $\lbrace\lambda_\gamma \rbrace$  is increasing and bounded. Moreover,
$$\lim\limits_{\gamma\to\infty} \lambda_\gamma=\lambda.$$
\end{lem}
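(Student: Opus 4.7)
The plan is to treat equation (\ref{7bis}) as a one-parameter family of implicit equations for $\lambda_\gamma$ and to compare it with the limit equation (\ref{7}) for $\lambda$, exploiting the fact that $\beta_\gamma(x) = 1 - \tfrac{2x\exp(x^2)}{\gamma\,\Ste}$ is strictly increasing in $\gamma$ and strictly decreasing in $x$ on its natural domain. Combined with the monotonicity of $F$ and $f$ (both strictly increasing), the left-hand side of (\ref{7bis}) is then strictly increasing in $\gamma$ and strictly decreasing in $x$, while the right-hand side is independent of $\gamma$ and strictly increasing in $x$. These four monotonicities drive the whole argument.

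For monotonicity of $\{\lambda_\gamma\}$, fix $\gamma_1 < \gamma_2$. At $x = \lambda_{\gamma_1}$ one has $\beta_{\gamma_2}(\lambda_{\gamma_1}) > \beta_{\gamma_1}(\lambda_{\gamma_1})$, so $F(\beta_{\gamma_2}(\lambda_{\gamma_1})) > F(\beta_{\gamma_1}(\lambda_{\gamma_1})) = f(\lambda_{\gamma_1})$. Since $F\circ\beta_{\gamma_2}$ is strictly decreasing in $x$ while $f$ is strictly increasing, their unique crossing point $\lambda_{\gamma_2}$ must lie strictly to the right of $\lambda_{\gamma_1}$, yielding $\lambda_{\gamma_1} < \lambda_{\gamma_2}$. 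For boundedness, the candidate upper bound is $\lambda$ itself: at $x = \lambda$ one has $\beta_\gamma(\lambda) < 1$ whenever this value is in the admissible range, hence $F(\beta_\gamma(\lambda)) < F(1)$; combining this with the identity coming from (\ref{7}) that relates $F(1)$ to $f(\lambda)$, the curve $F\circ\beta_\gamma$ lies strictly below the corresponding right-hand side at $x=\lambda$, so the same monotonicity argument forces $\lambda_\gamma < \lambda$ for every $\gamma > 0$. (The edge case where $\beta_\gamma(\lambda) < 0$ is handled by the cruder bound $\lambda_\gamma < \lambda_0 = \beta_\gamma^{-1}(0) < \lambda$.)

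Once monotonicity and boundedness are established, $\{\lambda_\gamma\}$ admits a finite limit $L \in (0,\lambda]$. To identify $L$ with $\lambda$, pass to the limit $\gamma\to\infty$ in (\ref{7bis}): since $\lambda_\gamma$ is bounded, $\beta_\gamma(\lambda_\gamma) = 1 - \tfrac{2\lambda_\gamma\exp(\lambda_\gamma^2)}{\gamma\,\Ste} \to 1$, and continuity of $F$ and $f$ turns (\ref{7bis}) at $\lambda_\gamma$ into the equation characterising $\lambda$ in (\ref{7}) at the point $L$. Uniqueness of the solution to (\ref{7}) provided by Lemma \ref{ExyunProbAux} then forces $L = \lambda$. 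The main obstacle is the monotonicity step, because it requires comparing two implicitly defined quantities without any closed form; the key trick that makes it short is noticing that the two sides of (\ref{7bis}) vary with $x$ in opposite senses, so a single sign check at $x = \lambda_{\gamma_1}$ already locates $\lambda_{\gamma_2}$ relative to $\lambda_{\gamma_1}$ without any further computation, and the same opposing-monotonicity argument then supplies both the bound $\lambda_\gamma < \lambda$ and the identification of the limit.
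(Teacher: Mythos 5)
Your proof is correct and follows essentially the same route as the paper: both rest on the observation that $F\circ\beta_\gamma$ is increasing in $\gamma$ and decreasing in $x$ while $f$ is increasing in $x$, which yields monotonicity of $\{\lambda_\gamma\}$, the bound $\lambda_\gamma<\lambda$, and the identification of the limit via $\beta_\gamma\to 1$ and uniqueness of the solution of $(\ref{7})$. The paper's own proof is a three-line sketch of exactly this argument; you have simply supplied the intermediate steps (the opposing-monotonicity comparison of crossing points, the monotone-convergence passage to the limit, and the edge case $\beta_\gamma(\lambda)<0$) that it leaves implicit.
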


\begin{proof}
 
 Let $\gamma_1<\gamma_2$ then $F(\beta_{\gamma_1})<F(\beta_{\gamma_2})$ where $F$ is given by (\ref{FG-temp}) and $\beta_\gamma$ is defined by (\ref{beta}). Therefore $\lambda_{\gamma_1}<\lambda_{\gamma_2}$.
 In addition as  $\lim\limits_{\gamma \to\infty} F(\beta_\gamma)=g$ we have $\lambda_\gamma<\lambda$, for all $\gamma>0$.
 Finally, we obtain that $\lim\limits_{\gamma\to\infty} \lambda_\gamma=\lambda$.

\end{proof}

\begin{lem}\label{Convergenciaygamma}
Let $\gamma>0$, $p\geq 0$ and $\delta>0$ be. If $(y_{\gamma},\lambda_\gamma)$ is the unique solution to the ordinary differential problem $\mathrm{(}$\ref{y}$\mathrm{)}$, $\mathrm{(}$\ref{condlambda}$\mathrm{)}$, $\mathrm{(}$\ref{eclambda}$\mathrm{)}$, $\mathrm{(}$\ref{ecconvectiva}$\mathrm{)}$ and $(y,\lambda)$ is the unique solution to the problem $\mathrm{(}$\ref{y}$\mathrm{)}$-$\mathrm{(}$\ref{eclambda}$\mathrm{)}$, then for every $\eta\in (0,\lambda)$ the following convergence holds
\begin{equation}
\lim\limits_{\gamma \to\infty} y_\gamma(\eta)=y(\eta).
\end{equation}
\end{lem}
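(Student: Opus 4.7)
The plan is to reduce the convergence to a continuity argument using the functional formulation from Lemma \ref{ProbAuxConv}, together with the already-established convergence $\lambda_\gamma \to \lambda$ from Lemma \ref{ConvergenciaLambda}.

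First, fix $\eta \in (0,\lambda)$. Since Lemma \ref{ConvergenciaLambda} gives $\lambda_\gamma \nearrow \lambda$, there exists $\gamma_0>0$ such that $\eta < \lambda_\gamma$ for every $\gamma > \gamma_0$; hence $y_\gamma(\eta)$ is well defined for all sufficiently large $\gamma$, and by Lemma \ref{ProbAuxConv} it satisfies the identity $F(y_\gamma(\eta)) = G_\gamma(\eta)$, where $F$ and $G_\gamma$ are defined in (\ref{FG-temp}) and (\ref{Ggamma}). Analogously, by Lemma \ref{ProbAux}, $y(\eta)$ satisfies $F(y(\eta))=G(\eta)$.

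Next, I would observe that the map $\mu \mapsto \mu\exp(\mu^2)\bigl(\erf(\mu)-\erf(\eta)\bigr)$ is continuous in $\mu$, so using $\lambda_\gamma \to \lambda$ one obtains
\begin{equation*}
\lim_{\gamma\to\infty} G_\gamma(\eta) \;=\; \tfrac{\sqrt{\pi}}{\mathrm{Ste}}\,\lambda\exp(\lambda^2)\bigl(\erf(\lambda)-\erf(\eta)\bigr) \;=\; G(\eta).
\end{equation*}
Since $F:[0,+\infty)\to[0,+\infty)$ is a strictly increasing $C^\infty$ function (with $F'>0$ for $x>0$), its inverse $F^{-1}$ is continuous. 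From $y_\gamma(\eta)=F^{-1}(G_\gamma(\eta))$ and $y(\eta)=F^{-1}(G(\eta))$, continuity of $F^{-1}$ then yields $y_\gamma(\eta)\to y(\eta)$, which is the desired conclusion.

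I do not anticipate a serious obstacle, since all the heavy lifting was already done in Lemmas \ref{ProbAux}, \ref{ProbAuxConv} and \ref{ConvergenciaLambda}: the functional reformulations trivialize the dependence of $y_\gamma$ and $y$ on their respective boundary parameters to that of continuous composition. The only mild subtlety worth flagging is the domain issue $\eta\in(0,\lambda_\gamma)$, which is handled by taking $\gamma$ large enough so that $\eta<\lambda_\gamma$; this is legitimate because $\lambda_\gamma$ is monotone increasing to $\lambda$ and $\eta<\lambda$ strictly. If one later wanted a uniform convergence statement on compact subsets of $[0,\lambda)$, the same argument works with a uniform continuity bound on $F^{-1}$ restricted to a compact range, but for the pointwise claim stated here no such refinement is needed.
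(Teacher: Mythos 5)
Your proposal is correct and follows essentially the same route as the paper: both fix $\eta\in(0,\lambda)$, use Lemma \ref{ConvergenciaLambda} to ensure $\eta<\lambda_\gamma$ for large $\gamma$, note $G_\gamma(\eta)\to G(\eta)$, and conclude via continuity of $F^{-1}$ applied to the representations $y_\gamma=F^{-1}(G_\gamma)$ and $y=F^{-1}(G)$. Your version is in fact slightly more careful, since you spell out the continuity of $\mu\mapsto\mu\exp(\mu^2)\left(\erf(\mu)-\erf(\eta)\right)$ and of $F^{-1}$, which the paper only asserts implicitly.
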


\begin{proof}
According to Lemmas \ref{ExyunProbAux} and \ref{ExyunProbAux-Conv} we have that $y_\gamma(\eta)=F^{-1}(G_\gamma(\eta))$, with $0<\eta<\lambda_\gamma$ and $y(\eta)=F^{-1}(G(\eta))$, with $0<\eta<\lambda$  where the functions $F$, $G$ and $G_\gamma$ are given by (\ref{FG-temp}) and 
(\ref{Ggamma}).

Let $\eta \in (0,\lambda)$. Then due to Lemma \ref{Convergenciaygamma}, there exists $\gamma_0$ such that $\eta<\lambda_\gamma$, for every $\gamma>\gamma_0$. As it can be easily seen that $G_\gamma(\eta)\to G(\eta)$ when $\gamma\to\infty$, it follows that
$$\lim\limits_{\gamma \to\infty} y_\gamma(\eta)=\lim\limits_{\gamma \to\infty} F^{-1}(G_\gamma (\eta))=F^{-1}\left( \lim\limits_{\gamma \to\infty} G_\gamma (\eta)\right)=F^{-1}(G(\eta))=y(\eta).$$
\end{proof}

In order to illustrate the results obtained in Lemmas \ref{ConvergenciaLambda} and  \ref{Convergenciaygamma}, in Figure \ref{FiguraConv}  we plot the $(y_\gamma,\lambda_\gamma)$ assuming $\delta=5$, $p=1$ and varying $\gamma=1, 25, 50,100$. We show that as $\gamma$ becomes greater, the function $y_\gamma$ converges pointwise to the solution $y$ of the problem $(\ref{y})$-$(\ref{eclambda})$.
\begin{figure}[h!!!!]
\begin{center}
\includegraphics[scale=0.22]{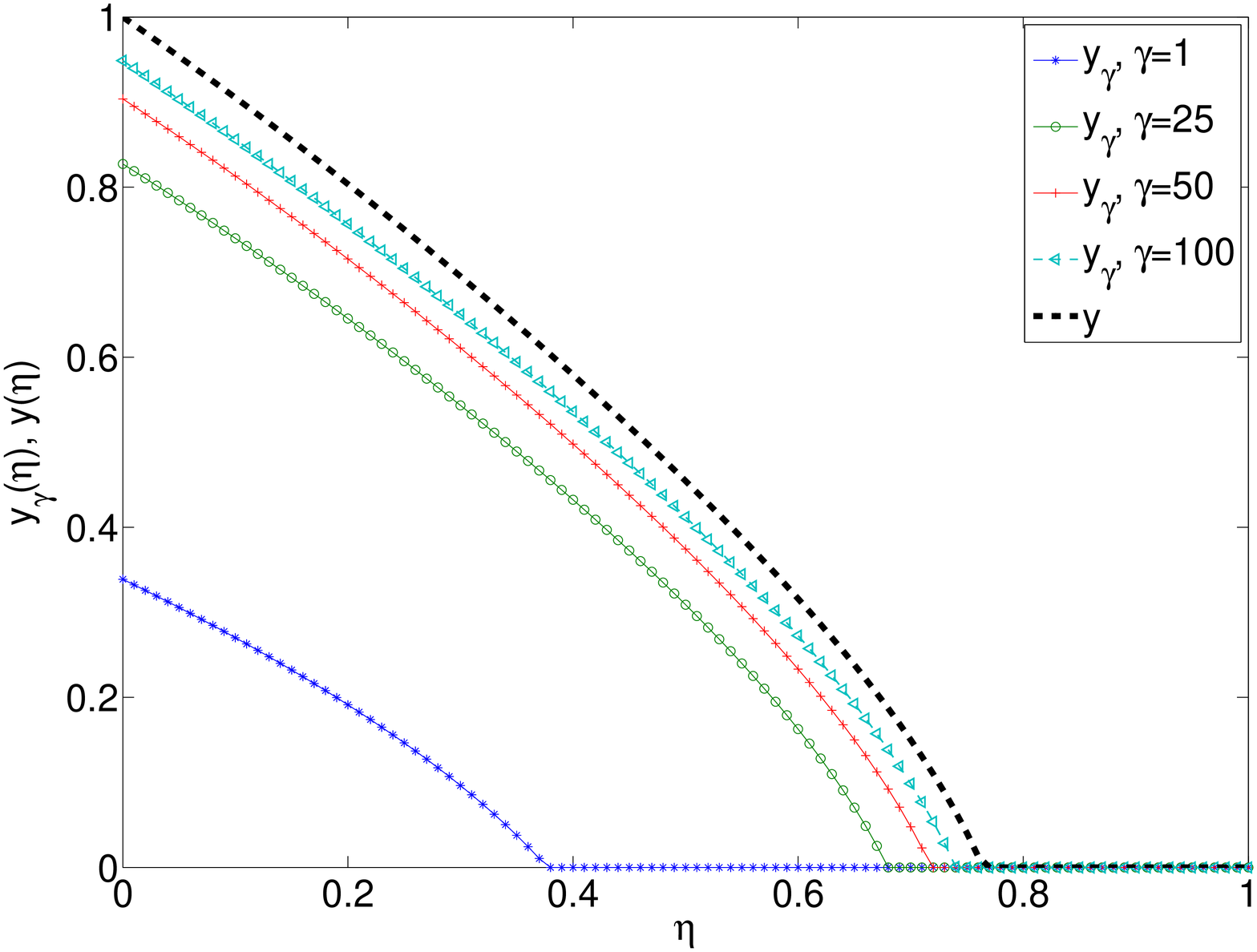}
\caption{Plot of $y_\gamma$ for $\gamma=1,25,50,100$, and $y$ functions fixing $p=1$ and $\delta=5$}
\label{FiguraConv}
\end{center}
\end{figure}

\begin{thm} \label{ConvergenciaTeo}
The unique solution $(T_\gamma,s_\gamma)$ to the Stefan problem governed by $(\ref{EcCalor}),(\ref{TempCambioFase})$-$(\ref{FrontInicial})$ and $(\ref{convectiva})$ converges pointwise to the unique solution $(T,s)$ to the Stefan problem $(\ref{EcCalor})$-$(\ref{FrontInicial})$ when $\gamma\to\infty$.
\end{thm}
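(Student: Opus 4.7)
The plan is to read off the claim directly from the similarity representation (\ref{T})--(\ref{s}), which both solutions share, combined with the convergences already established in Lemmas \ref{ConvergenciaLambda} and \ref{Convergenciaygamma}. Since the similarity variable $\eta=x/(2a\sqrt{t})$ is the same for the Dirichlet and the Robin problem, the only $\gamma$-dependent quantities are the profile $y_\gamma$ and the coefficient $\lambda_\gamma$, so the convergence $(T_\gamma,s_\gamma)\to(T,s)$ should follow by plugging those two lemmas into the explicit similarity formulas.

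First I would dispatch the free boundary: for every fixed $t>0$ we have $s_\gamma(t)-s(t)=2a(\lambda_\gamma-\lambda)\sqrt{t}$, and Lemma \ref{ConvergenciaLambda} immediately gives $s_\gamma(t)\to s(t)$. Then for the temperature I would fix $(x_0,t_0)$ with $0<x_0<s(t_0)$, set $\eta_0=x_0/(2a\sqrt{t_0})$, and note $0<\eta_0<\lambda$. Because $\{\lambda_\gamma\}$ is increasing with limit $\lambda$ by Lemma \ref{ConvergenciaLambda}, there exists $\gamma_0>0$ with $\eta_0<\lambda_\gamma$ for every $\gamma>\gamma_0$; hence $(x_0,t_0)$ lies in the domain of $T_\gamma$ for all sufficiently large $\gamma$, and
\begin{equation*}
T_\gamma(x_0,t_0)=(T_0-T_f)\,y_\gamma(\eta_0)+T_f\longrightarrow (T_0-T_f)\,y(\eta_0)+T_f=T(x_0,t_0)
\end{equation*}
by Lemma \ref{Convergenciaygamma}, which is the desired pointwise convergence on the interior of the limit liquid region.

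The main subtlety, and essentially the only step that requires care, is the bookkeeping about the two moving domains: because $\lambda_\gamma<\lambda$ one has $s_\gamma(t)<s(t)$, so the natural domain of $T_\gamma$ is strictly smaller than that of $T$, and the pointwise statement must therefore be interpreted for points that eventually enter the domain of $T_\gamma$. The monotone convergence $\lambda_\gamma\nearrow\lambda$ is precisely what makes this well posed, as shown above. For a boundary point $x_0=s(t_0)$ one has $T(x_0,t_0)=T_f$ by (\ref{TempCambioFase}), and under the standard extension of both solutions by $T_f$ outside the liquid phase (consistent with the convention used in the colormap comments of Sections 2 and 3) the conclusion again follows. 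No new analytic estimates beyond those encoded in the two preceding lemmas are needed; the theorem is essentially a corollary.
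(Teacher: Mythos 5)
Your argument is exactly the one the paper intends: the paper's proof is the single line that the result ``follows straightforward'' from Lemmas \ref{ConvergenciaLambda} and \ref{Convergenciaygamma} together with the similarity formulas (\ref{T})--(\ref{s}), and you have simply written out those details, including the correct handling of the shrinking domain via $\lambda_\gamma\nearrow\lambda$. The proposal is correct and takes the same route as the paper, just more explicitly.
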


\begin{proof}
The proof follows straightforward from Lemmas \ref{ConvergenciaLambda}, \ref{Convergenciaygamma} and formulas (\ref{T})-(\ref{s}).
\end{proof}

\section{Conclusions}
One dimensional Stefan problems with temperature dependent thermal coefficients and a Dirichlet or a Robin type condition at fixed face $x=0$ for a semi-infinite material were considered. Existence and uniqueness of solution was obtained in both cases. Moreover, it was  proved that the solution of the problem with the Robin type condition converges to the solution of the problem with the Dirichlet condition at the fixed face. For a particular case, an explicit solution was also obtained. In addition, computational examples were provided in order to show the previous theoretical results.

\section*{Acknowledgement}
The present work has been partially sponsored by the Project PIP No 0275 from
CONICET-UA, Rosario, Argentina, and ANPCyT PICTO Austral 2016 No 0090.

\end{document}